\newcommand{\hcal}[2]{{\mathcal{H}}_{#1}\!\left(#2\right)}
\newcommand{\hcount}[2]{{h}_{#1}\!\left(#2\right)}
\newcommand{\hmul}[2]{{{H}}_{#1}\!\left(#2\right)}
\newcommand{\emptypart}{$\phi$}
\newcommand{\emptypartpart}{{$\substack{()\\\cdots00\DASHsmaller11\cdots\\()}$}}
\newcommand{\DASH}{{\textbf{\Large |}}}
\newcommand{\DASHsmaller}{{\textbf{|}}}
\definecolor{darkgreen}{rgb}{0.,0.5,0.}
\newcommand{\bO}{\textcolor{blue}{0}}
\newcommand{\rO}{\textcolor{red}{0}}
\newcommand{\gO}{\textcolor{darkgreen}{0}}
\newcommand{\bI}{\textcolor{blue}{1}}
\newcommand{\rI}{\textcolor{red}{1}}
\newcommand{\gI}{\textcolor{darkgreen}{1}}
\newcommand{\dixhuit}{18}
\newcommand{\dix}{10}
\newcommand{\quinze}{15}
\newcommand{\onze}{11}
\newcommand{\douze}{12}
\newcommand{\quatorze}{14}
\newcommand{\seize}{16}
\newcommand{\dixsept}{17}
\newcommand{\dixneuf}{19}
\newcommand{\vingt}{20}
\newtheorem{thm}{Theorem}
\newtheorem{lemma}[thm]{Lemma}
\DeclareMathOperator{\lcm}{lcm}
\newcommand{\floor}[1]{{\left\lfloor {#1} \right\rfloor}}
\newcommand{\ud}{\mathrm{d}}
\newcommand{\nn}{\mathbf{n}}
\title{Integrality of hook ratios} 
\date{\today}
\author{Paul-Olivier Dehaye}
\address{D-MATH, ETH Z\"urich, 101 R\"amistrasse, 8092 Z\"urich, Switzerland}
\newtheorem{question}{Question}
\keywords{partitions, hook products, Kneser's theorem, McKay numbers, Beurling-Nyman criterion}
\begin{document}
\begin{abstract}
We study integral ratios of hook products of quotient partitions. This question is motivated by an analogous question in number theory concerning integral factorial ratios. We prove an analogue of a theorem of Landau that already applied in the factorial case. Under the additional condition that the ratio has one more factor on the denominator than the numerator, we provide a complete classification. Ultimately this relies on Kneser's theorem in additive combinatorics.
\end{abstract}

\maketitle
\section{Introduction}
\subsection{Questions}
This paper starts with the following (unsolved) question, a classic in number theory:
\begin{question}
For which vectors of positive integers $(\gamma_k:1 \le k \le K)$ and $(\delta_l:1 \le l \le L)$  is 
\begin{equation}
\label{eqn.alt}
v(n;\vec{\gamma},\vec{\delta}) := \frac{\prod_{k=0}^K \floor{\displaystyle\frac{n}{\gamma_k}}!}{\prod_{l=0}^L\floor{\displaystyle\frac{ n}{\delta_l}}!}
\end{equation}
an integer for all integers $n$?
\label{q.alt}
\end{question}
From now on, we assume $k$ runs between 1 and $K$ and $l$ between 1 and $L$. We call $L-K$ the \emph{height}.
Some easy reductions are immediately in order. We will require $\gamma_k \ne \delta_l$ for all $k$ and $l$. We also assume a \emph{balancing} condition: $\sum_k \frac{1}{\gamma_k} = \sum_l \frac{1}{\delta_l}$. This ensures the growth of $v(n;\vec{\gamma},\vec{\delta})$ is exponential rather than factorial in $n$. Historically, a closely related question is studied more often:
\begin{question}
\label{q.original}
For which vectors of positive integers $(\alpha_k : 1 \le k \le K)$ and $(\beta_l: 1 \le l \le L)$ is 
\begin{equation}
\label{eqn.original}
u(n;\vec{\alpha},\vec{\beta}) := \frac{\prod_{k=1}^K (\alpha_k n)!}{\prod_{l=1}^L(\beta_l n)!}
\end{equation}
an integer for all integers $n$?
\end{question}
We will use footnotes to provide information in the context of Question~\ref{q.original}, but these can be ignored  for understanding our results\footnote{In the setting of Question~\ref{q.original}, the \emph{balancing} condition requires $\sum_k \alpha_k = \sum_l\beta_l$.}: they are only there to provide links with existing literature.

Our main goal is to study a generalization of Question~\ref{q.alt}, replacing $n$ with a partition.

Consider a partition $\lambda$, or rather its Young diagram.  Let $\hcal{}{\lambda}$ be the multiset of values of its hooks, and $\hmul{}{\lambda}$ the product of these values, sometimes called the hook product (see Section~\ref{sec.def} for definitions).

Let 
\begin{equation}
\hcal{r}{\lambda} := \left\{ \frac{h}{r}: h \in \hcal{}{\lambda}| h \!\!\mod r = 0\right\},
\end{equation}
(with repetitions) and similarly (with repetitions) $\hmul{r}{\lambda}$ the product of these values. We also set $\hcount{r}{\lambda}$ to be the cardinality of $\hcal{r}{\lambda}$. We see that $\hcal{}{\lambda} = \hcal{1}{\lambda}$ and $\hmul{}{\lambda} = \hmul{1}{\lambda}$.

We will now use the notation $\nn$ for the partition $(n)$. To generalize Question~\ref{q.alt}, it is crucial to observe that 
$\hcal{r}{\nn} = \left\{1,\cdots,\lfloor n/r\rfloor\right\}$,
so $\hmul{r}{\nn} = \lfloor n/r \rfloor! $. In other words, if the partition is one-rowed, its hook product is a factorial that appears in Question~\ref{q.alt} (and similarly if it has only one column, of course). This suggests the following generalization, which will be primarily considered in this paper.
\begin{question}
\label{q.hook}
For which vectors of positive integers $\vec{\gamma} = (\gamma_k)$ and $\vec{\delta} = (\delta_l)$  is 
\begin{equation}
v(\lambda;\vec{\gamma},\vec{\delta}) := \frac{\prod_k\hmul{\gamma_k}{\lambda}}{\prod_l\hmul{\delta_l}{\lambda}}
\end{equation}
an integer for all partitions $\lambda$?
\end{question}
It is immediately clear that the condition required by Question~\ref{q.hook} is stronger\footnote{Note the abuse of notation, which is inconsequential as $v(\nn;\vec{\gamma},\vec{\delta})  = v(n;\vec{\gamma},\vec{\delta})$. } than the condition of Question~\ref{q.alt}, since the latter corresponds to $\lambda = \mathbf{n}$ (\textit{i.e.}~one-rowed).

This accounts for some of the motivation of Question~\ref{q.hook}: Question~\ref{q.alt}  admits only a partial answer at the moment, and one could thus hope that additional conditions the pairs $(\vec{\gamma},\vec{\delta})$ need to satisfy will slim the set to classify to a more manageable structure.

\subsection{Motivation}
There is additional motivation for this new question, coming from both number theory and the representation theory of symmetric groups.
\subsubsection{Motivation in number theory}
The problem posed originally by Question~\ref{q.alt}  is connected to very deep questions in number theory, via the following functions:
\begin{equation}
f(x) = f(x;\vec{\gamma},\vec{\delta})  := \sum_{k=1}^K\floor{\frac{x}{\gamma_k}} -\sum_{l=1}^L\floor{\frac{x}{\delta_l}}.
\label{eqn.function}
\end{equation}

The bridge between $f(x;\vec{\gamma},\vec{\delta}) $ and $v(n;\vec{\gamma},\vec{\delta})$  was uncovered by Landau. 
\begin{thm}[\cite{landau}]
The ratio \eqref{eqn.alt} is integral for all $n$ if and only if the function \eqref{eqn.function} is nonnegative for all real positive $x$.
\end{thm}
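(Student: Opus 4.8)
The plan is to pass to $p$-adic valuations and thereby reduce the integrality statement to a pointwise sign condition on $f$. The two ingredients are Legendre's formula $v_p(m!) = \sum_{j \ge 1}\floor{m/p^j}$ and the elementary floor-composition identity $\floor{\floor{m/a}/b} = \floor{m/(ab)}$, valid for positive integers $a,b$. Applying both to the numerator and denominator of \eqref{eqn.alt}, one computes for each prime $p$
\[
v_p\bigl(v(n;\vec{\gamma},\vec{\delta})\bigr) = \sum_{j \ge 1}\left(\sum_k \floor{\frac{n}{\gamma_k p^j}} - \sum_l \floor{\frac{n}{\delta_l p^j}}\right) = \sum_{j \ge 1} f\!\left(\frac{n}{p^j}\right).
\]
Since a rational number is an integer exactly when its valuation is nonnegative at every prime, the ratio \eqref{eqn.alt} is integral for all $n$ if and only if $\sum_{j\ge1} f(n/p^j) \ge 0$ for every prime $p$ and every $n$. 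The whole theorem thus becomes a question about the signs of these sums.

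The ``if'' direction is then immediate: if $f(x) \ge 0$ for all real $x>0$, every summand $f(n/p^j)$ is nonnegative, so the valuation is nonnegative for all $p$ and $n$ and the ratio is integral. For the converse I would argue by contraposition: assuming $f(x_0) < 0$ for some $x_0 > 0$, I would construct a prime $p$ and an integer $n$ with $v_p\bigl(v(n;\vec{\gamma},\vec{\delta})\bigr) < 0$. Because $f$ is a right-continuous integer-valued step function, $f < 0$ holds on an entire half-open interval $[x_0, x_0 + \epsilon)$. The idea is to pick $p$ large and $n$ so that the $j=1$ term is negative while all higher terms vanish. Choosing $p$ with $\epsilon p > 1$ forces the existence of an integer $n$ with $n/p \in [x_0, x_0+\epsilon)$, whence $f(n/p) \le -1$; and since such an $n$ satisfies $n \approx x_0 p$, one has $n/p^j \to 0$ for every $j \ge 2$ as $p \to \infty$, so for $p$ large each floor $\floor{n/(\gamma_k p^j)}$ and $\floor{n/(\delta_l p^j)}$ vanishes and $f(n/p^j) = 0$ for all $j \ge 2$. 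Then the valuation collapses to $f(n/p) < 0$, contradicting integrality.

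The main obstacle is exactly this isolation step in the converse: one must simultaneously guarantee that a single chosen term of the a priori infinite valuation sum is strictly negative and that every remaining term contributes nothing. Exploiting the step-function structure to upgrade a single point of negativity into a whole interval of negativity is what makes it possible to trap a rational $n/p$ inside that interval once $p$ is large, while the rapid decay $n/p^j \to 0$ is what annihilates the tail of the sum. The arithmetic manipulations (Legendre's formula and the floor identity) are routine once set up; the combinatorial care in selecting $p$ and $n$ is where the real content lies.
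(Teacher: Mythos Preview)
The paper does not prove this theorem at all; it merely cites it as Landau's result and then proves the partition-theoretic generalization (Theorem~\ref{thm.landaugeneral}) by a different mechanism. Your argument is the standard and correct proof of Landau's theorem: the Legendre--floor computation giving $v_p\bigl(v(n;\vec{\gamma},\vec{\delta})\bigr)=\sum_{j\ge1}f(n/p^j)$ is right, and your contrapositive construction works once you pin down the quantifiers (choose a prime $p$ large enough that both $\epsilon p>1$ and $p>x_0+\epsilon$, so that an integer $n\in[x_0 p,(x_0+\epsilon)p)$ exists and $n/p^2<1$, whence every floor in $f(n/p^j)$ for $j\ge2$ vanishes since all $\gamma_k,\delta_l\ge1$).

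It is worth noting how the paper's proof of the generalized version differs in spirit. There the ``only if'' direction is also by contraposition, but instead of choosing $p$ large to kill a tail of Legendre terms, one chooses $p$ larger than every hook length of the offending partition $\mu$ and then \emph{builds} a $\lambda$ via the Littlewood decomposition (empty $p$-core, all $p$-quotients equal to $\mu$). Your isolation trick ``make the $j=1$ term negative and force the tail to zero by sending $n/p^2\to0$'' is the one-dimensional shadow of this: in both cases the prime is chosen large so that exactly one level of the $p$-adic expansion carries the bad sign. Your approach is more elementary and self-contained for the factorial case; the paper's construction is what is needed once hooks replace factorials, since there is no analogue of ``$n/p^j\to0$'' for partitions.
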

Under the balancing condition, the function $f$ is easily seen to be periodic, of period dividing $M(\vec{\gamma},\vec{\delta}) := \lcm(\gamma_1,\cdots,\gamma_K,\delta_1,\cdots,\delta_L )$. We will present additional properties\footnote{Yet more properties result from the fact that this condition applies for all \emph{real} $x$: we can perform a linear scaling of $x$ to help on Question~\ref{q.original} instead of Question~\ref{q.alt}. In fact, these rescalings show that the sets of parameters $(\vec{\gamma},\vec{\delta})$ providing a positive answer to Question~\ref{q.alt} and the $(\vec{\alpha},\vec{\delta})$ for Question~\ref{q.original} are in bijection. Define the bijection $\Phi: (\vec{\mu},\vec{\nu}) \rightarrow ((M/{\mu_k}),(M/{\nu_l}))$ with $M =  M(\vec{\mu},\vec{\nu})$. If $(\vec{\alpha}, \vec{\beta})$ satisfies $\gcd(\alpha_1,\cdots,\alpha_K,\beta_!,\cdots,\beta_L)=1$ and is a solution to Question~\ref{q.original}, then $\Phi(\vec{\mu},\vec{\nu})$ is a solution to  Question~\ref{q.alt}. The converse is also true, and $\Phi$ is involutive if the $\gcd$ condition applies.\label{footnote.phi}} of $f$ in the course of the proof of Theorem~\ref{thm.height1hook}, in  Section~\ref{sec.height1hook}.

The importance of functions of type \eqref{eqn.function} comes from the Beurling-Nyman criterion for the Riemann Hypothesis. This asserts that the Riemann zeta function $\zeta(s)$ has  no zeros in the half-plane $\sigma > \frac{p-1}{p}$ if and only if for any $\epsilon >0$ there exists a function $\tilde{f}(x) = \sum_{n=1}^N c_n \floor{\alpha_n x} 
$
(with $\sum c_n \alpha_n =0$ and $0 \le \alpha_n \le 1$ for all $n$) such that 
$$
\left(\int_{1}^\infty  \left|\frac{1-\tilde{f}(x)}{x}\right|^p \ud x\right)^{1/p} < \epsilon.
$$
In other words, the Riemann Hypothesis is true if and only if linear combinations of similar to  \eqref{eqn.function} can successfully approximate the constant function 1 in the $L^2$-norm  ($p=2$ gives $\frac{p-1}{p} = \frac12$).

An example of $\vec{\delta}$ and $\vec{\gamma}$ would be $(30,1)$ and $(2,3,5)$, which lead to the always-integral factorial ratio
\begin{eqnarray}
v(n;(30,1),(2,3,5)) = \frac{\floor{\frac{n}{30}}!\,\,\,n!}{\floor{\frac{n}{2}}!\floor{\frac{n}{3}}!\floor{\frac{n}{5}}!},
\label{eqn.Chratio}
\end{eqnarray}
thereby providing a positive example for  Question~\ref{q.alt}\footnote{Under the bijection $\Phi$ defined in Footnote~\ref{footnote.phi}, this corresponds to the always-integral ratio
$
u(n;(30,1),(15,10,6)) = ((30n)!n!) /((15n)!(10n)!(6n)!),
$
which is actually closer to the ratio used by Chebyshev.}.
Chebyshev used this ratio to prove the bound
$$
0.92 \frac{x}{\log x} \le \pi(x) \le 1.11 \frac{x}{\log x} 
$$
on the prime counting function, after careful analysis of the valuation of \eqref{eqn.Chratio} at all primes, which he knew was positive. Later, Diamond and Erd\"os  \cite{DE} showed that one can use similar techniques to  improve the constant in both of those bounds. It turns out that these improvements lead all the way to values of 1 for both constants, but proving this requires the Prime Number Theorem (which is of course equivalent to $\pi(x) \sim \frac{x}{\log x}$).

Historically, integrality information for ratios such as \eqref{eqn.Chratio} has been used to deliver elementary proofs of several  statements in number theory. It is our hope that generalizations such as Question~\ref{q.hook} will eventually lead to more powerful techniques, exploiting the fact that partitions extend in 2 dimensions (this would give ``more room'' to construct interesting ratios).

The paper \cite{MR1784410} was partly motivated by similar questions: random matrix theory/number theory conjectures (the so-called Keating-Snaith conjectures) had previously led to some unexpectedly-integral ``bi-factorial ratios'', \textit{i.e.}~ratios of products of factorials at consecutive integers $\prod_{i=a}^b i$!.  The authors studied these ratios in details in that paper, and their valuations at primes displayed intriguing fractal-like patterns. In the current paper, we are still interested in integer ratios, but considering a different  construction.

In the context of using these integral hook ratios for number theory proofs, it is worth noting that an important component in the elementary proofs is the analytic continuation for $n!$ afforded by the $\Gamma$ function and the asymptotics given  by the Stirling formula. Similarly, there exists an extensive theory of hook products, substituting for a discrete product over boxes of the diagram an integral over the shape of the diagram. This can also be replaced by an integral against the outline of the diagram (its graph after rotation of the diagram into the Russian  orientation, given by a piecewise linear function of slope either equal to $\pm 1$ or not defined) of the Barnes $G$-function. For partitions of large size, this outline can be approximated by a Lipshitz 1 function (see \cite{KerovBook} for explanations along those lines). This can then be used to derive asymptotics as well as various expansions for the $G$-function are known.

\subsubsection{Motivation in representation theory}
In addition to potential applications in number theory, it is remarkable that the quantities investigated here appear in group theory as well.
Given an $n$, recall that the theory of representations of $\mathcal{S}_n$ in characteristic 0 can be fully understood via the action of the group its Specht modules, which form a complete set of irreducible representations, are indexed by partitions of $n$, and actually realized over $\mathbb{Z}$. 

One can also define them over a field $k$ of  characteristic $p>0$, but things get more complicated as the Specht modules will no longer be simple. Instead, $k\mathcal{S}_n$ will decompose into a sum $\oplus_{i} B_i$ of two-sided ideals, called \emph{$p$-blocks}. One can then decompose $1 = e_1+\cdots + e_r$ in this direct sum (each $e_i$ is central and idempotent).  Given a Specht module $V^\lambda$ associated to the partition $\lambda$, we see that only one of the $V^\lambda \cdot e_i$ can be nontrivial. We say that $V^\lambda$ (and by extension the partition $\lambda$) \emph{belongs} to the corresponding block. 

Nakayama's conjecture (now a theorem of Brauer and Robinson) says that two partitions belong to the same $p$-block iff they have the same $p$-cores, which are given by a combinatorial process described in Section~\ref{sec.def}.

In addition, the $p$-valuation of hook products is related to conjectures on Sylow subgroups of symmetric groups. Assume $G$ is a finite group and let $M_i(p,G)$ denote the set of ordinary irreducible characters of $G$ of degree divisible by $p^i$ but not $p^{i+1}$,  for $i$ a nonnegative integer. The integers $m_i(p,G) := |M_i(p,G)|$ are called the \emph{McKay numbers of $G$ with respect to $p$}.  McKay has conjectured for instance that $m_0(p,G) =  m_0(p,N_G(H))$ if $H$ is a $p$-Sylow of $G$. This has been proved for symmetric groups by Olsson \cite{OlssonMcKay}. 

We remind the reader of the classical hook formula \cite{Sagan}. If $\chi^\lambda$ is the ordinary character of $\mathcal{S}_n$ (we have that $\lambda$ partitions $n$), its degree, will satisfy
$
\deg \chi^\lambda = {n!}/{H(\lambda)}.
$
This quantity is also sometimes called the dimension of $\lambda$, and counts the number of standard Young tableaux of shape $\lambda$. 

We thus see that $m_i(p,\mathcal{S}_n)$ is concerned with divisibility by powers of $p$ of the hook product of $\lambda$ (this is perhaps most clearly expressed in the literature in \cite{Kane}). Question~\ref{q.hook} is similarly concerned with divisibility by $p$ (for all $p$) of ratios of hook products. 

\subsection{Results}
We will actually proceed as for the classification of integral factorial ratios. This requires proving an analogue of   Landau's theorem, with adaptations to our setting.
\begin{thm}
\label{thm.landaugeneral}
Let $(\gamma_k)$ and $(\delta_l)$ be two vectors of integral parameters. Then, the following two statements are equivalent:
\begin{equation} \frac{\prod_k \hmul{\gamma_k}{\lambda}}{\prod_l \hmul{\delta_l}{\lambda}} \text{ is integral for any partition $\lambda$;}\label{eqn.ratios}
\end{equation}
and
\begin{equation}
\sum_k \hcount{\gamma_k}{\mu} - \sum_l \hcount{\delta_l}{\mu} \ge 0 \text{ for any partition $\mu$.}
\label{eqn.counts}
\end{equation}
\end{thm}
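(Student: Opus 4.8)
The plan is to test integrality one prime at a time, as in the classical proof of Landau's theorem, and to convert $p$-adic valuations of hook products into hook counts using the $p$-quotient. The cornerstone is a Legendre-type formula: writing $v_p$ for the $p$-adic valuation, I would first verify directly from the definitions that for every partition $\lambda$, every positive integer $r$, and every prime $p$,
\[ v_p\!\left(\hmul{r}{\lambda}\right) = \sum_{j \ge 1}\hcount{rp^j}{\lambda}. \]
Indeed $v_p(\hmul{r}{\lambda}) = \sum_h v_p(h/r)$, the sum running over the hooks $h$ of $\lambda$ divisible by $r$, and $v_p(h/r) = \#\{j \ge 1 : rp^j \mid h\}$; exchanging the order of summation gives the identity. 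Summing over the parameters $\gamma_k$ and $\delta_l$, the valuation of the ratio in \eqref{eqn.ratios} at $\lambda$ equals
\[ \sum_{j \ge 1}\left(\sum_k \hcount{\gamma_k p^j}{\lambda} - \sum_l \hcount{\delta_l p^j}{\lambda}\right), \]
and \eqref{eqn.ratios} holds precisely when this quantity is nonnegative for every prime $p$ and every $\lambda$.

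The second ingredient is the behaviour of hook counts under the $p$-quotient $(\lambda^{(0)},\dots,\lambda^{(p-1)})$ recalled in Section~\ref{sec.def}. The standard identification of the hooks of $\lambda$ divisible by $p$ (after division by $p$) with the union of all hooks of the quotient components gives, for every $r$,
\[ \hcount{rp}{\lambda} = \sum_{i=0}^{p-1}\hcount{r}{\lambda^{(i)}}, \]
and, applied to the $p^j$-quotient, $\hcount{rp^j}{\lambda} = \sum_\nu \hcount{r}{\nu}$ summed over its $p^j$ components $\nu$. Abbreviating the left-hand side of \eqref{eqn.counts} as $g(\mu)$, substitution shows that the $j$-th summand above equals $\sum_\nu g(\nu)$, a sum of values of $g$ over the components of the $p^j$-quotient of $\lambda$. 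This settles the implication \eqref{eqn.counts}$\Rightarrow$\eqref{eqn.ratios} at once: if $g \ge 0$ on all partitions, then every summand, hence the whole valuation, is nonnegative for each $p$ and $\lambda$.

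For the converse I would argue by contraposition. Suppose $g(\mu) < 0$ for some partition $\mu$. As $\mu$ has only finitely many hooks, I would pick a prime $p$ larger than every hook length of $\mu$, so that $\hcount{p}{\mu} = 0$, and then invoke the bijection between partitions and pairs (empty $p$-core, $p$-quotient) to let $\lambda$ be the partition with empty $p$-core and $p$-quotient $(\mu,\emptyset,\dots,\emptyset)$. Then $\hcount{rp}{\lambda} = \hcount{r}{\mu}$ for all $r$, whereas for $j \ge 2$ one gets $\hcount{rp^j}{\lambda} = \hcount{rp^{j-1}}{\mu} = 0$, since $rp^{j-1}$ is divisible by $p$ and $\mu$ has no hook divisible by $p$. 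Hence all terms with $j \ge 2$ vanish and the valuation collapses to its $j=1$ term, namely $\sum_k \hcount{\gamma_k}{\mu} - \sum_l \hcount{\delta_l}{\mu} = g(\mu) < 0$; so the ratio fails to be integral at $\lambda$.

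I expect the routine valuation bookkeeping to cause no trouble; the real content is the two structural facts about $p$-quotients that the argument rests on — the hook-count identity $\hcount{rp}{\lambda} = \sum_i \hcount{r}{\lambda^{(i)}}$ and the existence (and uniqueness) of a partition with prescribed $p$-core and $p$-quotient. Both are classical consequences of the abacus / James--Kerber description of cores and quotients, and I would either cite them or reprove them from the combinatorial setup of Section~\ref{sec.def}; everything else is formal manipulation that mirrors the factorial case.
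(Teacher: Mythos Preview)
Your proof is correct and follows the same architecture as the paper's --- check integrality one prime at a time, express the $p$-adic valuation of the ratio via quotient identities, and for the converse build $\lambda$ from $\mu$ by contraposition using the Littlewood bijection. The bookkeeping differs slightly: you use the Legendre-type identity $v_p\!\left(\hmul{r}{\lambda}\right)=\sum_{j\ge 1}\hcount{rp^j}{\lambda}$ and a single pass to the $p^j$-quotient, whereas the paper routes through the $\gamma_k$- and $\delta_l$-quotients and then invokes Lemma~\ref{lemma.tower} on the $p$-core tower (and it plants $p$ copies of $\mu$ in the quotient rather than your one copy); your variant is a mild streamlining but not a genuinely different argument.
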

If we require both $\lambda $ and $\mu$ to be one-rowed partitions, these two weaker statements are still equivalent and that result is precisely Landau's original theorem.
This theorem  will be used to give easy proofs that would be very intricate otherwise, such as for the following theorem.
\begin{thm}
\label{thm.multinomial}
Let $s$ and $t$ be positive integers. Then, $$
\frac{\hmul{s}{\lambda}}{\left( \hmul{st}{\lambda}\right)^t}$$
is an integer for all partitions $\lambda$.
\end{thm}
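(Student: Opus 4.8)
The plan is to invoke the Landau-type criterion of Theorem~\ref{thm.landaugeneral}. The ratio in question is exactly $v(\lambda;\vec\gamma,\vec\delta)$ for the height-$(t-1)$ data $\vec\gamma=(s)$ and $\vec\delta=(st,\dots,st)$ with $t$ copies of $st$ (note that the balancing identity $\tfrac1s = t\cdot\tfrac1{st}$ holds, as one would expect). By Theorem~\ref{thm.landaugeneral} it is therefore integral for every $\lambda$ if and only if
$$\hcount{s}{\mu}-t\,\hcount{st}{\mu}\ge 0 \quad\text{for every partition }\mu,$$
so the whole problem reduces to this single counting inequality, which no longer mentions products of hooks but only their cardinalities.

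Next I would bring in the interaction between hook divisibility and $r$-quotients. The key structural fact (standard in the combinatorics of $r$-cores and $r$-quotients) is that dividing by $r$ the hook lengths of $\mu$ that are divisible by $r$ reproduces, as a multiset, the disjoint union of the hook multisets of the components $\mu^{(0)},\dots,\mu^{(r-1)}$ of the $r$-quotient, i.e.\ $\hcal{r}{\mu}=\biguplus_{j=0}^{r-1}\hcal{}{\mu^{(j)}}$. Taking cardinalities for $r=s$ gives $\hcount{s}{\mu}=\sum_{j=0}^{s-1}|\mu^{(j)}|$. Since $st\mid h$ is equivalent to $s\mid h$ together with $t\mid(h/s)$, the multiset $\hcal{st}{\mu}$ is obtained from $\hcal{s}{\mu}$ by the same divisibility filter with $t$; applying this inside each quotient component yields $\hcount{st}{\mu}=\sum_{j=0}^{s-1}\hcount{t}{\mu^{(j)}}$.

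With these two identities the target inequality becomes $\sum_j|\mu^{(j)}|\ge t\sum_j\hcount{t}{\mu^{(j)}}$, so it suffices to prove the pointwise lemma that for every partition $\nu$,
$$|\nu|\ge t\,\hcount{t}{\nu}.$$
This I would read off from the weight decomposition $|\nu|=\bigl|\mathrm{core}_t(\nu)\bigr|+t\,\hcount{t}{\nu}$: the number of hooks of $\nu$ divisible by $t$ equals the $t$-weight of $\nu$, namely the number of rim $t$-hooks stripped to reach the $t$-core, and each such removal deletes exactly $t$ cells, so $|\nu|-t\,\hcount{t}{\nu}$ is the size of the $t$-core and hence nonnegative. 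Summing over the $s$-quotient components then gives $\hcount{s}{\mu}-t\,\hcount{st}{\mu}=\sum_j\bigl|\mathrm{core}_t(\mu^{(j)})\bigr|\ge 0$, which is the inequality demanded by Theorem~\ref{thm.landaugeneral}.

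I expect the main obstacle to be setting up the two quotient identities cleanly, that is, justifying rigorously that the hooks of $\mu$ divisible by $st$ are precisely the hooks whose $(1/s)$-scaled images in the $s$-quotient are divisible by $t$. I would handle this either by citing the standard core/quotient correspondence or, self-containedly, through a beta-set (abacus) model: writing each hook length as a difference $x-y$ of a bead position $x$ and a gap $y$, splitting positions according to their residue modulo $s$ realizes the components of the $s$-quotient, and inside each residue class the further congruence modulo $t$ matches exactly the hooks divisible by $st$. Once this bookkeeping is in place, the lemma $|\nu|\ge t\,\hcount{t}{\nu}$ follows at once from the nonnegativity of core sizes, and the theorem is proved.
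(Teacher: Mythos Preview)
Your proposal is correct and follows exactly the route the paper takes: invoke Theorem~\ref{thm.landaugeneral} to reduce to the counting inequality $\hcount{s}{\mu}\ge t\,\hcount{st}{\mu}$, then pass to the $s$-quotients and apply the Littlewood weight identity \eqref{eqn.Littlewood_counts} componentwise. The ``obstacle'' you anticipate is already handled in the paper's preliminaries: the identity $\hcount{st}{\mu}=\sum_{j}\hcount{t}{\mu^{(j)}}$ is stated right after \eqref{eqn.essential}, and the lemma $|\nu|\ge t\,\hcount{t}{\nu}$ is precisely \eqref{eqn.Littlewood_counts} once one observes $\hcount{t}{\nu}=\sum_j|\nu^{(j)}|$.
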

This theorem, when restricted to one-rowed partitions, of course reduces to the integrality of balanced multinomial coefficients.

Despite this good start, we are unable to give a complete classification of integral hook ratios. It is worth observing that the classification of integral factorial ratios is also incomplete. 

For factorial ratios, in the special case of height 1, additional information is available: Bober developed a complete classification in his thesis \cite{bober1}. We will present part of his work in Section~\ref{sec.height1}. The final result includes a relatively complex list of parameters, involving 3 infinite families and 52 sporadic cases (see Section~\ref{sec.factorialheight1}). In contrast, we present here the counterpart of this classification, but this time for hook ratios. The resulting list will reduce to essentially one case. 

\begin{thm}
\label{thm.height1hook}
Assume $\vec{\gamma} $ and $\vec{\delta}$ are such that $v(\lambda;\vec{\gamma},\vec{\delta})$ is integral for all $\lambda$, under the balancing condition that $\sum_k \frac{1}{\gamma_k} = \sum_l \frac{1}{\delta_l}$. Assume further that this hook ratio is of height 1, \emph{i.e.}~$L-K  = 1$. 

Then,  we actually have $K=1$ and thus $L=2$, and $2\gamma_1 = \delta_1 = \delta_2$.
\end{thm}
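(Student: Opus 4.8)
The plan is to run the same strategy as for factorial ratios: replace integrality by the combinatorial positivity of Theorem~\ref{thm.landaugeneral}, extract from it a short list of pointwise inequalities on $\mathbb{Z}$, and then use the balancing and height-$1$ hypotheses to rigidify the configuration of parameters. Throughout I would use the standard fact that $\hcount{r}{\mu}$, the number of hooks of $\mu$ divisible by $r$, equals the size of the $r$-quotient of $\mu$, namely $\hcount{r}{\mu}=\left(|\mu|-|c_r(\mu)|\right)/r$, where $c_r(\mu)$ denotes the $r$-core. Feeding this into \eqref{eqn.counts} and cancelling the $|\mu|$ terms by balancing turns the condition into
\[
\sum_l \frac{|c_{\delta_l}(\mu)|}{\delta_l}\ \ge\ \sum_k \frac{|c_{\gamma_k}(\mu)|}{\gamma_k}\qquad\text{for every partition }\mu .
\]

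First I would specialise $\mu$ to the shapes whose core sizes I can compute in closed form. For $\mu=\nn$ one has $|c_r(\nn)|=n\bmod r$, so the displayed inequality is exactly Landau's $f(n)=\sum_l\{n/\delta_l\}-\sum_k\{n/\gamma_k\}\ge0$; in particular $f$ vanishes below the smallest parameter, and since its first jump must be nonnegative the smallest parameter is some $\gamma_k$. For rectangles $\mu=(c^d)$ the hook multiset is $\{u+w+1:\, 0\le u<c,\, 0\le w<d\}$, so the counting inequality for these shapes, after summing over $(u,w)$ and telescoping the Landau function, becomes the superadditivity $g(c+d)\ge g(c)+g(d)$ of $g(x):=\sum_{u=0}^{x-1}f(u)$. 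Writing $s=x\bmod r$ and $\psi_r(x)=s(r-s)/2r$, the closed form of the sawtooth sum turns this into the clean second-order inequality
\[
\sum_l \psi_{\delta_l}(x)\ \ge\ \sum_k \psi_{\gamma_k}(x)\qquad\text{for every }x .
\]
Evaluating it at multiples of $D:=\lcm(\delta_1,\dots,\delta_L)$, where the right-hand summands are the only ones that can be nonzero, immediately forces $\gamma_k\mid D$ for all $k$; averaging it over a full period forces $\sum_l\delta_l\ge\sum_k\gamma_k$.

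The heart of the argument is to show that the two pointwise inequalities above, under balancing and $L=K+1$, leave no room beyond the configuration of Theorem~\ref{thm.multinomial} with $t=2$, whose sufficiency is already known. I would test $f$ and $\psi$ at $x$ running through the subgroups $\delta_l\mathbb{Z}/D\mathbb{Z}$ and $\gamma_k\mathbb{Z}/D\mathbb{Z}$: on the set where all but one denominator contribution vanishes the inequality becomes a comparison between a single $\psi_{\delta_l}$ and a sum of $\psi_{\gamma_k}$, and these local comparisons translate into containment and covering relations among the residue classes. The height-$1$ condition makes the total slack minimal, so these coverings are forced to be exact; encoding the exactness as a sumset that is abnormally small and applying Kneser's theorem pins the stabiliser subgroup and collapses the system to $K=1$, $L=2$ with $\delta_1=\delta_2=2\gamma_1$.

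The main obstacle is precisely this last step: making the passage from the pointwise $\psi$-inequality to a single sumset deficiency precise, and checking that Kneser's stabiliser can only be the subgroup corresponding to $2\gamma_1=\delta_1=\delta_2$. A secondary point to verify is that rows and rectangles already suffice, so that one need not invoke the full family of $\mu$ in the core inequality, since a priori some intermediate shape could be needed to exclude a stray configuration surviving both the row and the rectangle tests.
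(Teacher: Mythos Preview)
Your proposal is incomplete, and you are candid about this: the passage from the $\psi$-inequality (equivalently, the superadditivity of $G(x)=\sum_{u<x} f(u)$) to a sumset deficiency suitable for Kneser is not carried out, and it is not clear how to do it. Superadditivity of a cumulative function is a second-order condition and does not naturally present itself as a statement about $A+A$ for any specific set $A$; the translation you gesture at (``encoding the exactness as a sumset that is abnormally small'') remains to be found, and your secondary worry---that rows and rectangles may not suffice---is well placed.

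The paper's proof sidesteps this difficulty by testing not on rectangles but on \emph{hook-shaped} partitions $\mu=(1+a,1^{l})$. The hook multiset of such a $\mu$ is $\{1,\dots,a\}\cup\{1,\dots,l\}\cup\{a+l+1\}$, so the sum $\sum_{\square\in\mu} g(h(\square))$ telescopes via \eqref{eqn.integration} to $f(a)+f(l)+f(a+l+1)-f(a+l)$. Since $f$ is $\{0,1\}$-valued at height $1$, failure of \eqref{eqn.counts} forces $f(a)=f(l)=f(a+l+1)=0$ and $f(a+l)=1$: one needs two elements of $A_0:=\{x:f(x)=0\}$ whose sum lands in the set $Y$ of positions where $f$ drops from $1$ to $0$. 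This \emph{is} a sumset statement. Kneser applied to $A_0+A_0$ in $\mathbb{Z}/P\mathbb{Z}$ (with $|A_0|=P/2$ from property~\textbf{(P2)} and trivial stabiliser by minimality of the period $P$) gives $|A_0+A_0|\ge P-1$, so a hook-shaped counterexample exists unless $|Y|=1$ and $A_0+A_0$ misses exactly the point $P-1$. That residual case forces $f=\mathbf{1}_{[P/2,P)}$, i.e.\ $K=1$, $\gamma_1=P/2$, $\delta_1=\delta_2=P$. So hooks, not rectangles, are the shapes that make Kneser bite; your rectangle test yields correct but coarser information and, as it stands, does not close the argument.
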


\section{Definitions}
\label{sec.def}
A \emph{partition} $\lambda=(\lambda_1,\cdots,\lambda_d)$ is a weakly decreasing, finite sequence of positive integers, called its \emph{parts}.  The \emph{size} of the partition $\lambda$, denoted $|\lambda|$, is the sum of its parts. We actually prefer to think of a partition in terms of its \emph{Young diagram}. This allows to consider  its  \emph{boxes}, and to define a \emph{hook} based at the box $\square$ as the set of boxes exactly to the right, or exactly below $\square$, or $\square$ itself. This associates to each box an integer, the cardinality of the hook of that box, called the \emph{ hook length}. We give some illustrations in Figure~\ref{fig.hooks}.

\begin{figure}[htbp]
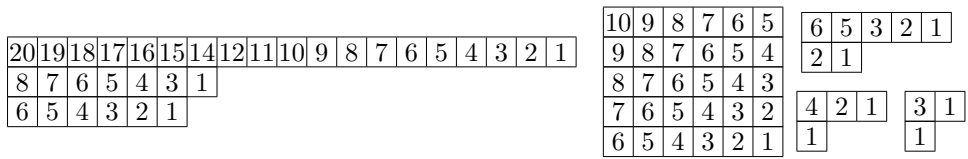

\begin{center}
\Yvcentermath1
\begin{tabular}{@{}cc@{}c@{}}
$\young(\vingt\dixneuf\dixhuit\dixsept\seize\quinze\quatorze\douze\onze\dix987654321,8765431,654321)$
&
$\young(\dix98765,987654,876543,765432,654321)$
&
\begin{tabular}{c}
$\young(65321,21)$
\vspace{.1in}
\\
$\young(421,1)$
\hspace{.00in}
$\young(31,1)$
\end{tabular}
\end{tabular}
\end{center}
\caption{The Young diagrams of the partitions $(18,7,6)$, $(6,6,6,6,6)$, $(5,2)$, $(3,1)$ and $(2,1)$, together with the hook lengths of the corresponding boxes.\label{fig.hooks}}
\end{figure}

Given a diagram $\lambda$, we can follow its \emph{01-sequence} as the (bidirectional) sequence of steps taken when following the outline of the partition, starting at the bottom and ending to the right (see Figure~\ref{fig.01} for an example).  A vertical step (up) is denoted by a 0, a horizontal step (right) by a 1. The 01-sequence $(x_i)$ of a partition, which is indexed by integers, is thus eventually 0 for negative $i$ and eventually 1 for positive $i$. We have only defined the sequence up to a shift so far, so we follow the additional  convention that $|\{x_i =0: i \ge 0\}|= |\{x_i =1: i<  0\} |$. There is then a unique 01-sequence associated to each partition, and conversely this sequence determines the partition. We write $x(\lambda)$ for the 01-sequence of $\lambda$. Figure~\ref{fig.sequence} gives a particular example.

\begin{figure}[htbp]
  \begin{center}
    \includegraphics[width=0.78\textwidth]{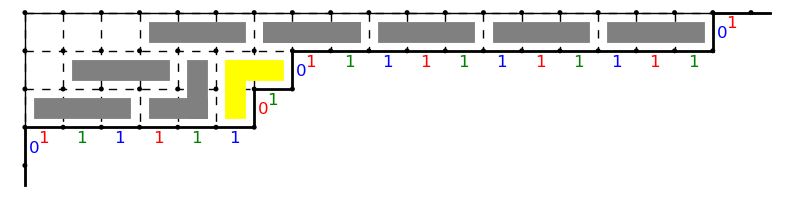}
        \caption{The Young diagram of the partition $\lambda = (18,7,6)$ together with its 01-sequence $\cdots\bO                 \rI\gI\bI \DASH \rI\gI\bI \rO\gI\bO  \rI\gI\bI \rI\gI\bI \rI\gI\bI \rI\gI\bO \rI\cdots$ (the $\DASH$ indicates the unique location where the number of 1s to the left equals the number of 0s to the right, which is between index -1 and 0 in the 01-sequence; visually this corresponds to the main diagonal in the Young diagram). Notice the mnemonic ``RGB'' for the colourings, starting after the $\DASH$ mark.
        The grey shapes indicate a sequence of 3-hooks that can be removed from $\lambda$, to finally obtain the 3-core $(3,1)$ of $\lambda$. One could start with the yellow shape for instance, and then make various choices for the order of the removals. The process always ends with the same $3$-core, $(3,1)$ (see Figure~\ref{fig.hooks}).
        Quotients will be $\cdots\rO\rI\DASH\rI\rO\rI\rI\rI\rI\rI\cdots$ (partition $(2)$), $\cdots\gO\DASH\gI\gI\gI\gI\gI\gI\gI\cdots$ (empty partition) and $\cdots\bO\bI\bI\DASH\bO\bI\bI\bI\bO\bI\cdots$ (partition $(5,2)$). Remark that the yellow shape connects two labels in the 01-sequence that are blue. It thereby corresponds to a specific box of the quotient partition $(5,2)$ (see Figure~\ref{fig.hooks} and compare with Equation~\eqref{eqn.essential}).\label{fig.sequence}}
    \label{fig.01}
  \end{center}
\end{figure}


Let $p$ be a positive integer, not necessarily prime. We say that a partition $\lambda$ is a \emph{$p$-core} if it has no hook of length divisible by $p$ (it is actually equivalent to ask for the partition not to have a hook of length exactly $p$). For instance, $(3,1)$ is a 3-core as can be seen from Figure~\ref{fig.hooks}.

In terms of the 01-sequence $(x_i)$, a hook of length $p$ in $\lambda$ corresponds to a pair $x_{i_0} = 1$, $x_{i_0+p} = 0$. Swapping these two entries in the sequence $(x_i)$ gives the 01-sequence of a partition of size decreased by $p$, due to the removal of a hook of length $p$. Starting with one partition $\lambda$, we can swap pairs at distance $p$ and iterate this procedure until no more $p$-hook can be found. The partition associated to this final 01-sequence is actually independent from the ordering of the hook removals (or the ordering of swaps of a $\cdots1\cdots 0\cdots$ into a $\cdots0\cdots 1\cdots$), and called the \emph{$p$-core of $\lambda$}, which we denote $\lambda^{[]}$. We see in Figure~\ref{fig.01} that the 3-core of the partition $(18,7,6)$ is $(3,1)$. 

We now define the $p$-quotients of the partition $\lambda$, for $p$ still not a prime. We start with $ (x_i) := x(\lambda) $, and consider the $p$-tuple of subsequences given by $((x_{p i +j} ): 0\le j \le p-1)$. For each $0\le j\le p-1$, this defines a 01-sequence and thus an associated partition, which we call the $j^{\text{th}}$ \emph{$p$-quotient} of $\lambda$ and denote $\lambda^{(j)}$. We  obtain a $p$-tuple of $p$-quotients for $\lambda$.  Figure~\ref{fig.01} presents a computation of quotients.

These quotient partitions have the  property, essential for us, that
\begin{equation}
\label{eqn.essential}
\hcal{p}{\lambda} = \underset{0 \le j \le p-1}{\cup}  \hcal{}{\lambda^{(j)}},
\end{equation}
with the union taken with multiplicity (Figure~\ref{fig.hooks} has the hook lengths for the partition that appears in Figure~\ref{fig.01} and its quotients). In other words, the union of the hook lengths of the $p$-quotients of $\lambda$ give the hook lengths of $\lambda$ that are divisible by $p$, divided by $p$. In particular, this implies 
$\hcount{p}{\lambda} = \sum_{0 \le j \le p-1} \hcount{}{\lambda^{(j)}}$ or even $\hcount{pk}{\lambda} = \sum_{0 \le j \le p-1} \hcount{k}{\lambda^{(j)}}$.

For each positive integer $p$, we have so far defined a map, 
$
\{\text{partitions} \} \rightarrow \{ p-\text{cores} \} \times \{ \text{partitions} \}^p,
$
called the \emph{Littlewood decomposition at $p$}, and the construction actually shows that this map is a bijection. This can be thought of as a generalization of Euclidian division for integers. We refer the reader to \cite{Macdonald} for more information.
The Littlewood decomposition also satisfies
\begin{equation}
\label{eqn.Littlewood_counts}
|\lambda| = |p-\text{core}(\lambda)| + p \sum_{j=0}^{p-1} |\lambda^{(j)}|,
\end{equation}
which will be useful for us (we repeat that there is no requirement for $p$ to be prime).

Integers admit representations in any base  $p$ (assuming $p>1$). Similarly, given a partition $\lambda$, we will iterate the Littlewood decomposition at $p$. We define recursively $\lambda^{(i_1,\cdots,i_d)} = \lambda^{(i_1,\cdots,i_{d-1})(i_{d})}$. For each tuple in $\{0,1,\cdots,p-1\}^d$, this defines a partition. Note that by convention $\lambda^{()} = \lambda$. We also define $\lambda^{[i_1,\cdots,i_d]} = \lambda^{(i_1,\cdots,i_d)[]}$ (\textit{i.e.}~the core of that particular quotient).

We see these two definitions as two mappings, from $S= \{\{0,\cdots,p-1\}^d:d \in \mathbb{N}\}$ to either partitions or $p$-cores. We see elements of $S$ as indices for the vertices of the $p$-ary infinite rooted tree\footnote{A \emph{$p$-ary infinite rooted tree} is an infinite connected graph with no cycle such that each vertex is of degree $p+1$, with the exception of a distinguished vertex, called its \emph{root}, which is of degree $p$. }: recursively, the index $(i_1,\cdots,i_d)$ corresponds to the $i_d^\text{th}$ child of the vertex indexed by $(i_1,\cdots,i_{d-1})$, with the index $()$ mapped to the root. This means $\lambda^{(i_1,\cdots,i_d)}$ (resp.~$\lambda^{[i_1,\cdots,i_d]}$) provides a labelling of the vertices of the $p$-ary infinite rooted tree by partitions (resp.~$p$-cores), called the \emph{$p$-quotient tower of $\lambda$} (resp.~\emph{$p$-core tower of $\lambda$}). We can deduce from  \eqref{eqn.Littlewood_counts} that all but finitely many labels of the $p$-core tower or the $p$-quotient tower will be trivial partitions.  An example is presented in Figure~\ref{fig.tree}.

\newcolumntype{C}{ >{\centering\arraybackslash} m{5cm} }
\begin{figure}[hbp]
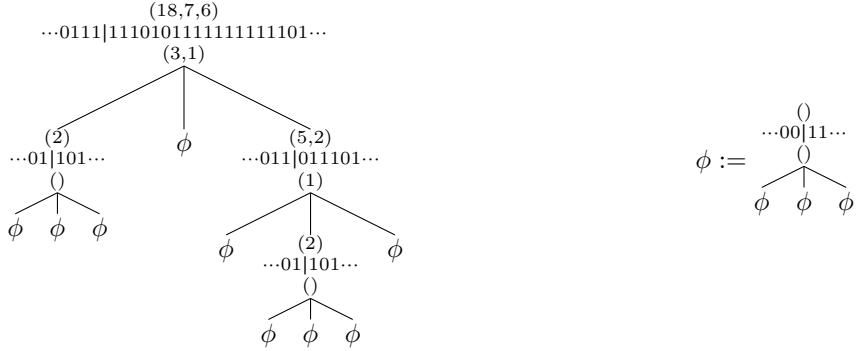

\begin{center}
\begin{tabular}{Cp{2cm}C}
    \Tree [.{$\substack{(18,7,6) \\ \cdots0111\DASHsmaller1110101111111111101\cdots \\ (3,1)}$} 
    [.{$\substack{(2)\\\cdots01\DASHsmaller101\cdots\\()}$} {\emptypart} {\emptypart} {\emptypart} ] 
    [.{\emptypart} ] 
    [.{$\substack{(5,2)\\\cdots011\DASHsmaller011101\cdots\\(1)}$} {\emptypart} [.{$\substack{(2)\\\cdots01\DASHsmaller101\cdots\\()}$} {\emptypart} {\emptypart} {\emptypart} ]  {\emptypart} ] 
    ] &&
{$\phi:=\substack{\Tree [.{\emptypartpart} {\emptypart} {\emptypart} {\emptypart} ]}$}
    \end{tabular}
    \end{center}
    \caption{\label{fig.tree}
    We give the 3-ary infinite rooted tree and three different labellings of the vertices of that tree associated to the partition $\lambda = (18,7,6)$. Selecting the first line at each vertex gives the $3$-quotient tower of the $\lambda$. Selecting the third line at each vertex gives the $3$-core tower of $\lambda$. The second line gives the 01-sequence of the label in the $3$-quotient tower of $\lambda$. Of course, this tree is infinite: to write the trivial entries more compact, we rely on the recursive definition on the right, which also happens to be these same labellings for the partition $\lambda = ()$. 
    The children of a node can be computed via the 01-sequence, by splitting the 0s and 1s according to the colouring conventions explained in Figure~\ref{fig.01}.}
\end{figure}

We can now state the following lemma, where $v_p(n)$ for the largest exponent $\alpha$ such that $p^\alpha$ divides $n$.
\begin{lemma} 
\label{lemma.tower}
Let $p$ be a prime. Then, 
\begin{equation}
v_p(\hmul{}{\lambda}) = \sum_{d=1} d \sum_{(q_i) \in \{0,\cdots,p-1\}^d }  \left|\lambda^{[q_1,\cdots,d_d]}\right|.
\label{eqn.valuation}
\end{equation}
\end{lemma}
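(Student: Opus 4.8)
The plan is to turn the multiplicative statement about hook products into an additive statement about hook counts, push those counts through the iterated Littlewood decomposition of Section~\ref{sec.def} until only $p$-cores survive, and then read off the weight with which each core contributes. The first step is to record the valuation as a layered count: since $v_p(h)=\#\{\alpha\ge 1: p^\alpha\mid h\}$, summing over all hooks $h\in\hcal{}{\lambda}$ and exchanging the order of summation gives
\[
v_p(\hmul{}{\lambda}) = \sum_{\alpha\ge 1}\hcount{p^\alpha}{\lambda}.
\]
This reformulation is crucial because each $\hcount{p^\alpha}{\lambda}$ is a counting function on which the Littlewood identities act directly, whereas the product $\hmul{}{\lambda}$ does not decompose cleanly.

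Next I would iterate the identity $\hcount{pk}{\lambda}=\sum_{0\le j\le p-1}\hcount{k}{\lambda^{(j)}}$ recorded after \eqref{eqn.essential}. Taking $k=p^{\alpha-1}$ and applying it $\alpha$ times peels off one coordinate of the quotient tower at each step, yielding
\[
\hcount{p^\alpha}{\lambda}=\sum_{(q_1,\cdots,q_\alpha)\in\{0,\cdots,p-1\}^\alpha}\hcount{}{\lambda^{(q_1,\cdots,q_\alpha)}}=\sum_{(q_1,\cdots,q_\alpha)}\left|\lambda^{(q_1,\cdots,q_\alpha)}\right|,
\]
where the last equality uses that $\hcount{1}{\mu}$ simply counts every box of $\mu$. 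Thus $v_p(\hmul{}{\lambda})$ equals the sum, over all depths $\alpha\ge 1$, of the total sizes of the depth-$\alpha$ vertices of the $p$-quotient tower.

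The remaining and most delicate step is to re-express these quotient-tower sizes through the core-tower sizes $\left|\lambda^{[q_1,\cdots,q_d]}\right|$ that appear in the statement. Writing $C_d(\mu):=\sum_{(q_1,\cdots,q_d)}\left|\mu^{[q_1,\cdots,q_d]}\right|$ for the total size of the depth-$d$ cores, I would proceed by structural induction on the quotient tree. The base case is a $p$-core, which has no hook divisible by $p$, so both sides vanish. The inductive step combines the $p$ children through the one-step consequence $v_p(\hmul{}{\lambda})=\hcount{p}{\lambda}+\sum_j v_p(\hmul{}{\lambda^{(j)}})$ of \eqref{eqn.essential}, together with the reindexing $\sum_j C_d(\lambda^{(j)})=C_{d+1}(\lambda)$, which reflects that the depth-$d$ vertices below $\lambda^{(j)}$ are exactly the depth-$(d+1)$ descendants of $\lambda$ through $j$. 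The identity then reduces to matching $\hcount{p}{\lambda}$ against the telescoping difference of the claimed weights across consecutive levels, after \eqref{eqn.Littlewood_counts} has been used to rewrite $\hcount{p}{\lambda}=\sum_j\left|\lambda^{(j)}\right|$ entirely in terms of the core tower.

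The main obstacle is exactly this last bookkeeping: a core at depth $d$ is only isolated after $d$ successive Littlewood decompositions, and one must carefully control the multiplicity with which its size re-enters the count as the recursion unwinds, since each descent through \eqref{eqn.Littlewood_counts} introduces a nested factor of $p$. Pinning down that this multiplicity collapses to precisely $d$ — rather than to some other explicit function of the depth — is the crux of the argument, and is where I would concentrate essentially all of the effort; I would verify it by checking that the increment contributed at each node, once expressed through the core tower, matches $\hcount{p}{\lambda}$ exactly.
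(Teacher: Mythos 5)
Your first two steps are correct, and they follow a genuinely different route from the paper: the paper deduces the lemma from the pointwise claim \eqref{eqn.stronger} that the number of boxes with $v_p(h(\square))=d$ equals the total size $C_d:=\sum_{(q_i)}\bigl|\lambda^{[q_1,\cdots,q_d]}\bigr|$ of the depth-$d$ layer of the core tower, whereas you expand $v_p(\hmul{}{\lambda})=\sum_{\alpha\ge1}\hcount{p^\alpha}{\lambda}$ and iterate $\hcount{pk}{\lambda}=\sum_j\hcount{k}{\lambda^{(j)}}$ to get $\hcount{p^\alpha}{\lambda}=Q_\alpha:=\sum_{(q_i)}\bigl|\lambda^{(q_1,\cdots,q_\alpha)}\bigr|$. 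But the crux you explicitly postponed --- that the multiplicity of a depth-$d$ core ``collapses to precisely $d$'' --- is false, and carrying out your own step 3 honestly shows this. Summing \eqref{eqn.Littlewood_counts} over all depth-$\alpha$ quotients gives $Q_\alpha=C_\alpha+p\,Q_{\alpha+1}$; note the factor $p$, which your sketch acknowledges but then hopes away. Unwinding yields $Q_\alpha=\sum_{e\ge\alpha}p^{\,e-\alpha}C_e$ and hence
\begin{equation*}
v_p(\hmul{}{\lambda})\;=\;\sum_{\alpha\ge1}Q_\alpha\;=\;\sum_{d\ge1}\left(1+p+\cdots+p^{\,d-1}\right)C_d\;=\;\sum_{d\ge1}\frac{p^d-1}{p-1}\,C_d.
\end{equation*}
The weight is geometric in the depth, not linear; it agrees with $d$ only for $d\le1$.

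So the gap is not merely in your bookkeeping: the lemma as stated is false, and your framework, completed correctly, refutes it. Concretely take $p=2$, $\lambda=(4)$: then $v_2(\hmul{}{\lambda})=v_2(24)=3$, while the $2$-core tower of $(4)$ has a single nontrivial label, the partition $(1)$ at depth $2$ (the quotient of $(4)$ is $\{(2),()\}$ with empty core, and the quotient of $(2)$ is $\{(1),()\}$ with empty core), so $C_1=0$, $C_2=1$ and the right side of \eqref{eqn.valuation} is $2\cdot1=2\ne3$; the corrected geometric weight gives $3\cdot1=3$ as it must. The paper's own proof breaks at the same point: \eqref{eqn.stronger} already fails for $(4)$, $p=2$, $d=1$, where there is one box with $v_2(h(\square))=1$ (the hook of length $2$) but $C_1=0$. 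The underlying error is the base case: the number of boxes of $\mu$ whose hook length is prime to $p$ is $|\mu|-\sum_j|\mu^{(j)}|$, not $\left|\mu^{[\,]}\right|=|\mu|-p\sum_j|\mu^{(j)}|$; the correct pointwise statement is $\left|\{\square\in\lambda:v_p(h(\square))\ge\alpha\}\right|=Q_\alpha=\sum_{e\ge\alpha}p^{\,e-\alpha}C_e$. (The corrected formula is consistent with the classical Macdonald--Olsson degree formula via Legendre's formula for $v_p(n!)$.) Since the corrected weights $\frac{p^d-1}{p-1}$ are still positive, the only property used downstream, the application of the lemma in the proof of Theorem~\ref{thm.landaugeneral} survives with $d$ replaced by $\frac{p^d-1}{p-1}$; but as a proof of the statement as printed, your attempt cannot be completed, and you should have tested your flagged crux on a small example before deferring it.
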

\begin{proof}
The proof of this lemma is an easy consequence of the stronger statement that for $d$ a positive integer 
\begin{equation}
\left|\left\{\square \in \lambda: v_p(h(\square))=d\right\}\right| =  \sum_{(q_i) \in \{0,\cdots,p-1\}^d }  \left|\lambda^{[q_1,\cdots,d_d]}\right|,
\label{eqn.stronger}
\end{equation}
which itself results from the recursive definition of the $p$-core tower and Equation~\eqref{eqn.essential} for the base case.
\end{proof}

\textbf{Remark. }If $p$ is not prime, \eqref{eqn.stronger} remains true, but fails to imply \eqref{eqn.valuation}. Consider the example of $p=6$ and $\lambda = (6)$. In that case, the $6$-core tower of $(6)$ consists of a single nontrivial label, by the partition of size 1, at a a node adjacent to the root (so the RHS evaluates to 1). Meanwhile, we have $\hmul{}{\lambda}=720$, so the LHS of \eqref{eqn.valuation} evaluates to 2. This extra factor of 6 actually comes from the presence of hooks of length 2 and 3, with no direct consequence on the $6$-core tower.

\section{Proofs of Theorem~\ref{thm.landaugeneral} and \ref{thm.multinomial}}
We immediately prove Theorem~\ref{thm.landaugeneral}, the generalization of Landau's theorem.

\begin{proof}[of Theorem~\ref{thm.landaugeneral}]

\textbf{\eqref{eqn.ratios} $\Rightarrow$ \eqref{eqn.counts}:} We prove the statement by contraposition. Assume that for some $\mu$ the sum in \eqref{eqn.counts} is negative, \emph{i.e.~}$\sum_i \hcount{r_i}{\mu} - \sum_j \hcount{s_j}{\mu} < 0.$ We will construct a partition $\lambda$ that fails to satisfy Equation~\eqref{eqn.ratios}. 

Take a prime $p$, larger than any hook length in $\mu$. Consider now  the partition $\lambda$ whose core is the empty partition and whose quotient consists of $p$ copies of $\mu$ (many other choices are possible). Then the valuation of the LHS of \eqref{eqn.ratios} at $p$ is negative (and at least -$p$), our contradiction.

\textbf{\eqref{eqn.counts} $\Rightarrow$ \eqref{eqn.ratios}:} Pick a prime $p$. Our goal is to prove that the valuation of \eqref{eqn.ratios} at $p$ is positive. Because \eqref{eqn.ratios} is a quotient, this valuation is given as a linear combination of the counts of hooks of $\lambda$ divisible by the $\gamma_k$ and $\delta_l$. 
This is thus a linear combination of the valuations at $p$ of the hook product of $\nu$, for $\nu$ running through the various $\gamma_k$ and $\delta_l$ quotients of $\lambda$. Each such valuation can be expressed, thanks to Lemma~\ref{lemma.tower}, as a(n effectively finite) sum over the labels in the $p$-core tower of $\nu$.  We now swap this linear combination over $\nu$ and the infinite sum over label positions in a $p$-core tower. We then get a sum over positions in the $p$-core tower of an expression of the form \eqref{eqn.counts} ($\mu$ runs through the $p$-cores appearing in the $p$-core tower of $\nu$). Since each such expression is positive by assumption, the overall sum is also positive.
\end{proof}

Once Theorem~\ref{thm.landaugeneral} is proved, Theorem~\ref{thm.multinomial} is indeed trivial to prove:

\begin{proof}[of Theorem~\ref{thm.multinomial}]
Thanks to Theorem~\ref{thm.landaugeneral}, we know we only need to show
\begin{equation}
\hcount{s}{\lambda} - t \cdot \hcount{st}{\lambda} \ge 0 
\end{equation}
for all $\lambda$, $s$ and $t$. This follows immediately from applying \eqref{eqn.Littlewood_counts} to each of the $s$-quotients of $\lambda$.
\end{proof}

\textbf{Remark.}  The same proof works for $v(\lambda;(x),\vec{\delta})$, as long as $x$ divides each of the $\delta_l$s. This case corresponds under $\Phi$ to a multinomial coefficient $u(n;(\sum_l \beta_l),\vec{\beta})$, but not all $u$ of that form map back to $v$ that answer Question~\ref{q.hook} positively. Consider $\vec{\beta} = (9,6,3,3,3)$, which then gives $v(\lambda;(3),(8,12,24,24,24))$, which is non-integral as soon as $\lambda$ is a 3-core with a hook of length 8 (by Theorem~\ref{thm.landaugeneral}), for instance $(6,4,2)$.
\subsection{Explicit construction of non-integral ratios}
\label{sec.nonintegral}
The proof of Theorem~\ref{thm.landaugeneral} actually constructs example partitions leading to non-integral ratios: it translates a $\mu$ failing Condition~\eqref{eqn.counts} into a $\lambda$ failing Condition~\eqref{eqn.ratios}.

Consider for instance the parameters $(\gamma_k) = (1,30)$ and $(\delta_l) = (2,3,5)$.  Take the partition $\mu = (6,6,6,6,6)$ and evaluate Condition~\eqref{eqn.counts} (maybe with the help of Figure~\ref{fig.hooks}). We get $30+0-15-10-6 = -1$ (a one-rowed $\mu$ would have given 0, see Theorem~\ref{thm.bober}, page \pageref{thm.bober}). Now that we have found a $\mu$ that fails \eqref{eqn.counts}, we construct a $\lambda$ that fails \eqref{eqn.ratios}. The largest hook length in $\mu$ is 10. We thus pick $p= 11$, and compute that the partition with empty core and quotients given by 11 copies of $\mu$ is $(66^{55})$, 
where \eqref{eqn.ratios} evaluates to the \emph{fraction}
$$
\frac{\substack{2^{60} 3^{53} 5^9 7^{35} 19^{12}  23^23  29^{29}  31^{31}  37^{37}  41^{41}  43^{43}  47^{47} 53^{53}  59^{55}  61^{55}  67^{54}  71^{50}  73^{48}  79^{42}\\  83^{38}  89^{32}  97^{24}  101^{20}  103^{18}  107^{14}  109^{12}  113^8}}{11^{11}}.
$$
%
%

In order to show a set of parameters fails at Question~\ref{q.hook}, it is thus efficient  to exhibit a $\mu$ that fails Equation~\eqref{eqn.counts}.  For the classification at height 1, we will produce $\mu$s more systematically, of a very slim shape, and that are actually hooks themselves (\textit{i.e.} their diagrams will not contain a $2\times 2$ rectangle).
\section{Ratios of height 1}
To better appreciate Theorem~\ref{thm.height1hook}, we first present the situations for integral factorial ratios.
\label{sec.height1}
\subsection{Classification of integral factorial ratios}
\label{sec.factorialheight1}
Based on an interpretation due to Rodriguez-Villegas of Question~\ref{q.original} in terms of hypergeometric functions (Question~\ref{q.original} succeeds if and only if the hypergeometric function $\sum_n u(n;\vec{\alpha},\vec{\beta}) z^n$ is algebraic), Bober \cite{bober1} was able to fully classify parameters providing a positive answer to Question~\ref{q.original}, if the height $L-K$ is assumed to be 1. For completeness, we now present this classification.

\begin{thm}[\cite{bober1}]
\label{thm.bober}
Let $L=K+1$. Assume $\alpha_k \ne \beta_l$ for all $k,l$, that $\sum_k \alpha_k = \sum_l \beta_l$ and that 
$\gcd(\alpha_1,\cdots,\alpha_k,\beta_1,\cdots,\beta_l)=1.$
Then, $u(n;\vec{\alpha},\vec{\beta})$ is an integer for all $n$ if and only if 
\begin{enumerate}
\item $(\vec{\alpha},\vec{\beta})$ takes one of the three forms: $((x+y),(x,y))$ for $gcd(x,y)=1$, $((2x,y),(x,2y,x-y))$ for $gcd(x,y)=1$ and $x>y$ or $((2x,2y),(x,y,x+y))$ for $gcd(x,y)=1$.
\item $(\vec{\alpha},\vec{\beta})$ is one of 52 sporadic parameter sets, for instance $((30,1),(15,10,6))$, which  corresponds  under $\Phi$ to $(\vec{\gamma},\vec{\delta}) = ((30,1),(2,3,5))$. 
\end{enumerate}
\end{thm}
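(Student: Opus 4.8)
The plan is to follow Bober's route through hypergeometric functions rather than attacking the step function $f(x;\vec{\alpha},\vec{\delta})$ directly. The paragraph preceding the statement grants us the essential equivalence of Rodriguez-Villegas: $u(n;\vec{\alpha},\vec{\beta})$ is integral for all $n$ if and only if the generating function $\sum_n u(n;\vec{\alpha},\vec{\beta})\, z^n$ is algebraic over $\mathbb{Q}(z)$. Taking this as given, the whole problem collapses to a classification of \emph{algebraic} hypergeometric functions subject to the integer-parameter constraints $\sum_k\alpha_k=\sum_l\beta_l$, $\alpha_k\ne\beta_l$, $\gcd(\alpha_1,\cdots,\alpha_K,\beta_1,\cdots,\beta_L)=1$, and $L=K+1$.

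First I would make the hypergeometric structure explicit. Under the balancing condition and $L=K+1$, the ratio $u(n+1)/u(n)$ is a quotient of two polynomials in $n$ of equal degree $\sum_k\alpha_k=\sum_l\beta_l$; reading off their roots identifies the upper and lower parameters of a balanced ${}_NF_{N-1}$. Concretely, the upper parameters (taken mod $1$) form the multiset $\bigcup_k\{j/\alpha_k:1\le j\le \alpha_k\}$ and the lower parameters the analogous multiset built from the $\beta_l$, with one integer absorbed into the $n!$ that normalises a hypergeometric series. The height-1 hypothesis $L=K+1$ is exactly what produces a genuine ${}_NF_{N-1}$ (one more lower than upper parameter, radius of convergence $1$), so the standard machinery applies; the coprimality condition fixes a canonical representative in each equivalence class under common scaling.

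Next I would invoke the Beukers-Heckman classification of finite-monodromy (equivalently algebraic) hypergeometric functions. Their criterion is an \emph{interlacing} condition: ${}_NF_{N-1}$ is algebraic precisely when the two multisets of parameters, viewed as points $e^{2\pi i a}$ on the unit circle, alternate around it. Translating this interlacing of roots-of-unity parameters back into arithmetic conditions on $(\vec{\alpha},\vec{\beta})$ should produce the three infinite families: the binomial family $((x+y),(x,y))$ from the low-rank algebraic case (the central-binomial type ${}_1F_0$), and the two families $((2x,y),(x,2y,x-y))$ and $((2x,2y),(x,y,x+y))$ from the dihedral entries of the rank-$2$ (Schwarz list) algebraic ${}_2F_1$.

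Finally, the $52$ sporadic sets are the coprime integer realisations of the finite exceptional (primitive, finite-monodromy) list in Beukers-Heckman, the higher-rank analogue of the non-dihedral Schwarz list. Since that exceptional list has bounded rank $N$ and each entry admits only finitely many realisations by coprime integer tuples, this is a finite search. The hard part will be exactly this last bookkeeping: converting each circle datum into integer tuples, discarding those that collapse under $\gcd$-reduction or under the $\alpha_k=\beta_l$ degeneracy to smaller or already-listed solutions, and verifying that precisely $52$ genuinely distinct parameter sets survive with no double-counting against the three families. I expect the delicate point to be ensuring the interlacing-to-arithmetic translation of the family step is exhaustive and disjoint from the sporadic enumeration; cross-checking every surviving candidate against the nonnegativity of $f(x;\vec{\alpha},\vec{\beta})=\sum_k\floor{\alpha_k x}-\sum_l\floor{\beta_l x}$ (the $u$-version of Landau's criterion) gives an independent, purely combinatorial confirmation of each case.
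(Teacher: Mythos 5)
The paper itself contains no proof of this statement: Theorem~\ref{thm.bober} is imported wholesale from Bober's thesis \cite{bober1}, and the only methodological hint the paper gives is exactly the one you start from, namely Rodriguez-Villegas's equivalence between integrality of $u(n;\vec{\alpha},\vec{\beta})$ for all $n$ and algebraicity of $\sum_n u(n;\vec{\alpha},\vec{\beta})z^n$. Your outline is a faithful reconstruction of the cited proof: the term-ratio computation identifying a balanced ${}_NF_{N-1}$ with upper parameters $j/\alpha_k$ and lower parameters $j/\beta_l$ taken mod $1$ (one integer absorbed into the normalising $n!$, the height-$1$ hypothesis yielding exactly one surplus lower parameter), the Beukers--Heckman interlacing criterion for finite monodromy, and a finite search for the sporadic cases are precisely Bober's ingredients. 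One attribution needs repair, and it matters for the exhaustiveness you yourself flag as the delicate point: the families $((2x,y),(x,2y,x-y))$ and $((2x,2y),(x,y,x+y))$ are \emph{not} ``dihedral entries of the rank-$2$ Schwarz list'' --- for general coprime $x,y$ the hypergeometric rank $N$ grows without bound, so no bounded-rank list can produce them; in the Beukers--Heckman classification the infinite families correspond to the imprimitive (monomial, Kummer-induced) finite monodromy groups, which occur in every rank, while the $52$ sporadics come from the finite table of primitive groups, of rank at most $8$ --- consistent with the paper's observation that $K+L\le 9$ in the sporadic cases. Two smaller points: when some $j/\alpha_k$ coincides with some $j'/\beta_l$ modulo $1$ these parameters cancel without changing the power series, so the criterion must be applied to the reduced (disjoint) parameter multisets, a reduction your sketch should make explicit; and your closing cross-check is sound, since by the paper's property \textbf{(P2)} height $1$ plus Landau nonnegativity forces the step function into $\{0,1\}$, which is exactly the step-function form of interlacing.
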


\subsection{Integral hook ratios}
\label{sec.height1hook}
Since the conditions presented in Question~\ref{q.hook} are stronger than in Question~\ref{q.alt}, we expect a subset of the (image under $\Phi$ of the) parameter list presented in Theorem~\ref{thm.bober} to provide a positive answer to Question~\ref{q.hook}. In fact, we can  show that this stronger condition slims down entirely  the infinite families and the sporadic cases, with the exception of one lone set of parameters. This is Theorem~\ref{thm.height1hook}, which we now prove.

\begin{proof}[of Theorem~\ref{thm.height1hook}]
We start by recalling parts of the theory for integral factorial ratios, based on the functions $f(x;\vec{\lambda},\vec{\gamma})$.
In the balanced case, for integral factorial ratios of height $L-K$, Landau's theorem and Bober's theory lead  to a $f(x;\vec{\gamma},\vec{\delta})$ with values in $\{0,1,\cdots,L-K\}$ (this is property \textbf{(P0)}). In addition, some extra properties are satisfied (these are easy to prove from the definition in \eqref{eqn.function}):  
\begin{description}
\item[(P1)] $f$ is periodic, of period $P$ dividing $M=M(\vec{\gamma},\vec{\delta}) = \lcm(\gamma_1,\cdots,$ $\gamma_K,$ $\delta_1,\cdots,$ $\delta_L) $;
\item[(P2)] for small enough $\epsilon>0$, we have $f(x)  + f(\epsilon+M-1-x) = L-K$ for all real $x$;
\item[(P3)] $f(M-\epsilon) = f(P-\epsilon) = L-K$, for $\epsilon>0$ small enough.
\end{description}

Property \textbf{(P3)} is  a direct consequence of \textbf{(P2)}, and \textbf{(P0)} is derived slightly more streinuously from \textbf{(P2)}.

By the discussion in Section~\ref{sec.nonintegral}, for a given pair $(\vec{\gamma},\vec{\delta})$ to fail Question~\ref{q.alt}, we need  a $\mu$ such that 
\begin{equation}
\sum_k \hcount{\gamma_k}{\mu} - \sum_l \hcount{\delta_l}{\mu} < 0.\label{eqn.hookcount}
\end{equation}
From this $\mu$, one can then construct (much bigger) examples of partitions  $\lambda$ failing Question~\ref{q.hook} outright. 

Since $h_i(\mu)$ counts hooks of $\mu$ divisible by $i$, an equivalent condition to \eqref{eqn.hookcount} is 
\begin{equation}
\sum_{\square \in \mu} g(h(\square))<0,
\label{eqn.equivalent}
\end{equation}
if $g(x) = g(x;\vec{\gamma},\vec{\delta}):= \sum_k \mathbf{1}{(\gamma_k \text{ divides }x)} - \sum_l \mathbf{1}{(\delta_l \text{ divides }x)}$ (and $\mathbf{1}(\text{condition})$ is 1 or 0 depending on whether the condition is true or false). Crucially, we have (for nonnegative integral $x$)
\begin{equation}
\sum_{y=1}^x g(y;\vec{\gamma},\vec{\delta})= f(x;\vec{\gamma},\vec{\delta}).\label{eqn.integration}
\end{equation}
We are now only thinking of ratios of height 1. Therefore, by \textbf{(P0)}, the values of $f$ are either 0 or 1. Hence, the values of $g$ are either 0,1 or -1. It is clear that $g$ is periodic as well, of period $P$. 

Because Question~\ref{q.hook} is stronger than Question~\ref{q.alt}, we can assume all the properties \textbf{(P0)}-\textbf{(P3)} for the functions $f$ and $g$. These requirements come from considering partitions with just one row. 

We now consider partitions of hook shape only. Take $\mu = (1+a,1,\cdots,1)$, with $l$ repetitions of 1 at the end ($a$ stands for arm, $l$ for leg). This partition has hooks of length $1,2,\cdots,l$, $1,2,\cdots,a$, and $a+l+1$.
Therefore, Condition~\eqref{eqn.equivalent} for finding a $\mu$ reduces to
\begin{equation}
\sum_{i=1}^a g(i) +\sum_{i=1}^l g(i) +g(a+l+1) = f(a)+f(l)+f(a+l+1)-f(a+l) < 0.
\label{eqn.conditionf}
\end{equation}
Given the possible values of $f$, we are left with no choice. We must have 
\begin{equation}
\label{eqn.setup}
f(a)=f(l)=f(a+l+1)=0 \text{ and }f(a+l)=1.
\end{equation}

We will soon need Kneser's theorem, a generalization of the Cauchy-Davenport theorem. Let $G$ be an abelian group. If $A \subseteq G$, define the \emph{stabilizer} of  $A$ to be $\mathcal{S}(A) = \{ g \in G | A+g = A\}.$ Clearly, $0 \in \mathcal{S}(A) \le G$.  Let $A,B \subseteq G$, finite and nonempty sets, with $S= \mathcal{S}(A+B)$. Then, Kneser's theorem affirms that
$|A+B| \ge |A+S| + |B+S| -|S|$.

Let $P$ be the period of $f$, which we know divides $M$. Define $A_i = \{0,\cdots,P-1\} \cap \{x  | f(x) = i\}$. We have thanks to \textbf{(P2)} that $|A_0| = |A_1| = P/2$. 
We will now take $G = \mathbb{Z}/P\mathbb{Z}$, with $A=B=A_0$. By the definition of the period, we know that $|S|=1$. We then have that $|A_0+A_0| \ge P-1$. 

Set $Y = \{ y : y \in \{0,\cdots,P-1\} | g(y) = 1 \text{ and } g(y+1) = 0\}$. By \textbf{(P3)}, we know that $P-1 \ \in Y$. To solve \eqref{eqn.setup}, we thus need to find $a$ and $l$ such that $a+l \in Y \cap (A_0+A_0)$ (this equation is in $\mathbb{Z}/P\mathbb{Z}$, not merely $\mathbb{Z}$). Because of the cardinality bounds $|Y|\ge 1$ and $|A+A|\ge P-1$, this argument can only fail to provide a $\mu$ if both bounds are actually equalities. In that case we would thus have $A_0+A_0 = \{0,\cdots,P-2\}$, with a unique drop for $f$ along one period (this is the definition of $Y$) occurring  at $P-1 ,P$ (by \textbf{(P3)}) and of height 1 (by \textbf{(P0)}). Since $f$ has only one drop, it can also have only one increase (since its values can only be 0 or 1). The location of that unique increase can then be determined to be at $P/2-1,P/2$ (due to \textbf{(P0)}), and this gives the whole statement.
\end{proof}

\textbf{Example. } We provide another $\mu$ than $(6,6,6,6,6)$ for $\vec{\gamma} = (30,1), \vec{\delta} = (2,3,5)$ (see Section~\ref{sec.nonintegral}). In that case, the values taken by $f(x;(30,1),(2,3,5))$ are
$$
\begin{array}{c|p{0.0025\textwidth}p{0.0025\textwidth}p{0.0025\textwidth}p{0.0025\textwidth}p{0.0025\textwidth}p{0.0025\textwidth}p{0.0025\textwidth}p{0.0025\textwidth}p{0.0025\textwidth}p{0.0025\textwidth}p{0.0025\textwidth}p{0.0025\textwidth}p{0.0025\textwidth}p{0.0025\textwidth}p{0.0025\textwidth}p{0.0025\textwidth}p{0.0025\textwidth}p{0.0025\textwidth}p{0.0025\textwidth}p{0.0025\textwidth}p{0.0025\textwidth}p{0.0025\textwidth}p{0.0025\textwidth}p{0.0025\textwidth}p{0.0025\textwidth}p{0.0025\textwidth}p{0.0025\textwidth}p{0.0025\textwidth}p{0.0025\textwidth}p{0.0025\textwidth}p{0.0025\textwidth}}
x\!\!\!\!  \mod 30&0&1&2&3&4&5&6&7&8&9&10&11&12&13&14&15&16&17&18&19&20&21&22&23&24&25&26&27&28&29&$\cdots$\\\hline
f&0& 1& 1& 1& 1& 1& 0& 1& 1& 1& 0& 1& 0& 1& 1& 0& 0& 1& 0& 1& 0& 0& 0& 1& 0& 0& 0& 0& 0& 1&$\cdots$
\end{array} 
$$
We see that $P = 30$, $Y= \left\{ 5,9,11,14,17,19,23,29\right\}$ and $A_0 = \{0, 6, 10, 12, 15, 16, 18, 20, 21,$ $22,$ $24,$ $25,$ $26, 27, 28\}$, with $A_1$ the complement to $A_0$. One can check that $A_0 + A_0 = \{0,\cdots,28\}$ (remember this is in $\mathbb{Z}/30\mathbb{Z}$!), with $29$ missing. Because $Y$ is not a singleton, any element of $Y$ different from 29 leads to a $\mu$. Take $9\in Y$. Then,  we solve (non-constructively and $\mod 30$) $9 =  15+24 \in A_0+A_0$. Therefore,  we have found an example partition  $\mu$ that does not satisfy \eqref{eqn.counts}: it has an arm length $a=15$, and leg length $l=24$, \textit{i.e.}~$(16,1,\cdots,1)$, with 24 trailing zeroes. This $\mu$ can be used to construct a $\lambda$ that fails to satisfy \eqref{eqn.ratios}, using the technique of Section~\ref{sec.nonintegral}. 

%

\section{Bounds}
For integral factorial ratios, some bounds are known \cite{bober2} for $K+L$ given the value of $L-K$. These techniques were improved in \cite{Schmerling}. The bounds obtained can be stated in different ways: as an explicit bound on $K+L$ as a function of $L-K$, as an asymptotic bound on the growth of that function, or optimized and explicit for specific values of $L-K$. 

For the small values of $L-K$, these bounds seem to overestimate the actual value of $L+K$: in the case of $L-K=1$ for instance, the exhaustive classification of Bober (Theorem~\ref{thm.bober}) gives a maximal value of 9 for $K+L$ (for one of the 52 sporadic cases). Schmerling's optimized version of the bound when $L-K=1$ gives an upper bound of 43, still far off the truth. For $L-K=2$ or 3, non-exhaustive computer searches point to a similar overshoot of the upper bounds.
 
As Question~\ref{q.hook} is stronger than Question~\ref{q.alt}, these bounds will also apply for integral hook ratios. For completeness we state a general version of the bounds here.

\begin{thm}[\cite{Schmerling}, improving on \cite{bober2}, restated for hook ratios]
Assume $((\gamma_k:1 \le k \le K),(\delta_l:1 \le l \le L))$ are such that $\gamma_k \ne \delta_l$ for all $k,l$, $\sum_k \frac{1}{\gamma_k} = \sum_l \frac{1}{\delta_l}$ and provide a positive answer to Question~\ref{q.hook}.
Then, 
$
(K+L)\le 287 (L-K)^{3.44},
$
and there exists an absolute constant $C>0$ such that
$
(K+L) \le C (L-K)^2 (\log |\log (L-K)|)^2.
$
\end{thm}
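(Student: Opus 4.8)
The plan is to obtain both inequalities for free from the corresponding bounds for integral factorial ratios in \cite{bober2,Schmerling}, using that a positive answer to Question~\ref{q.hook} is strictly stronger than one to Question~\ref{q.alt}. Concretely, if $(\vec{\gamma},\vec{\delta})$ makes $v(\lambda;\vec{\gamma},\vec{\delta})$ integral for every partition $\lambda$, then it is in particular integral for every one-rowed partition $\lambda=\mathbf{n}$. Since $\hmul{r}{\mathbf{n}}=\floor{n/r}!$, this specialization is exactly the factorial ratio $v(n;\vec{\gamma},\vec{\delta})$ of \eqref{eqn.alt}, so the very same vectors $(\vec{\gamma},\vec{\delta})$ provide a positive answer to Question~\ref{q.alt}, with identical $K$ and $L$ and hence identical height $L-K$ and total $K+L$. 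First I would record that the standing hypotheses transport without loss: $\gamma_k\ne\delta_l$ and the balancing condition $\sum_k\frac{1}{\gamma_k}=\sum_l\frac{1}{\delta_l}$ are assumed in both problems.

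Next I would move the resulting factorial solution into the form in which the cited bounds are phrased, namely via the bijection $\Phi$ of Footnote~\ref{footnote.phi} relating Questions~\ref{q.alt} and~\ref{q.original}. Because $\Phi$ sends $(\vec{\mu},\vec{\nu})$ to $((M/\mu_k),(M/\nu_l))$ componentwise, it acts factor-by-factor and therefore preserves the number of numerator and denominator terms; in particular it leaves $K$, $L$, the height $L-K$, and the total $K+L$ unchanged, and a harmless normalization by the overall $\gcd$ (which again does not alter $K$ or $L$) puts the parameters in the shape required by \cite{bober2,Schmerling}. With this dictionary in place, the two displayed estimates are simply the statements of those references applied to the factorial solution produced above and read back through $\Phi$: the explicit $287(L-K)^{3.44}$ bound and the asymptotic $C(L-K)^2(\log|\log(L-K)|)^2$ bound both bound $K+L$ purely in terms of the height, which is exactly the invariant that survives the reduction.

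The honest assessment is that there is no genuinely new obstacle here, and I would present the theorem as a transfer rather than a fresh proof. All of the analytic difficulty --- the delicate control of how many factors an integral factorial ratio of a given height can carry --- is already carried out in \cite{bober2,Schmerling}, and we merely import it. The only point demanding care on our side is the bookkeeping: verifying that passage to one-rowed partitions and the map $\Phi$ change neither $K$ nor $L$, so that a bound on the number of factors in the factorial world is literally a bound on the number of factors in the hook world. Phrased at the level of principle, the content is just that the stronger constraints of Question~\ref{q.hook} can only shrink, never enlarge, the admissible parameter sets already controlled by the factorial theory; no property special to genuinely two-dimensional partitions is invoked for this particular statement.
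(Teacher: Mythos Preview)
Your proposal is correct and matches the paper's own treatment: the paper does not give a separate proof but simply observes that since Question~\ref{q.hook} is stronger than Question~\ref{q.alt}, the bounds of \cite{bober2,Schmerling} apply verbatim. Your additional remark that $\Phi$ preserves $K$ and $L$ is the only bookkeeping needed and is straightforward.
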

\section{Related questions}
Question~\ref{q.alt} is  equivalent to Question~\ref{q.original} for factorial ratios. Question~\ref{q.hook} extends Question~\ref{q.alt} to hook ratios, but Question~\ref{q.original} does not extend in the same way. Indeed, quotients of partitions are given  by the Littlewood correspondence but there is no suitable operation constructing multiples of partitions (at least not with the correct effect on partition sizes). Still, another question  might extend Question~\ref{q.original} rather than Question~\ref{q.alt}.

A second remark concerns the motivation in number theory. In their paper \cite{DE}, Diamond  and Erd\"os need not have an integral factorial ratio $u(n;\vec{\alpha},\vec{\beta})$ for \emph{all} n, but they are satisfied with a sequence $(n_i)$ going to infinity such that  $u({n_i};\vec{\alpha},\vec{\beta})$ is always integral. It would be interesting to consider a similar relaxation to Question~\ref{q.hook} (for instance by restricting the sizes of the partitions considered to be in some subset of the integers).

\section{Acknowledgements}
This paper benefited from conversations with J.~Bober, Prof.~J.~Olsson and indirectly Prof.~Ch.~Bessenrodt.

\bibliographystyle{siam}
\bibliography{references}
\label{sec:biblio}
\end{document}